\tikzstyle{point}=[draw, black, fill,shape=circle, minimum size=4pt, inner sep=0pt]
\tikzset{squiggly/.style={decorate, decoration=snake}}
\newtheorem{theorem}{Theorem}[section]
\newtheorem{lemma}[theorem]{Lemma}
\newtheorem{proposition}[theorem]{Proposition}
\newtheorem{corollary}[theorem]{Corollary}
\newtheorem*{main-result}{Main-Result}
\theoremstyle{definition}
\newtheorem{example}[theorem]{Example}
\newtheorem{remark}[theorem]{Remark}
\crefname{claim}{Claim}{Claims}
\renewcommand{\dim}{\operatorname{dim}}
\newcommand{\excise}[1]{}
\newcommand{\Q}{{\mathbb{Q}}}
\newcommand{\R}{{\mathbb{R}}}
\renewcommand{\a}{\alpha}
\newcommand{\eps}{\epsilon}
\newcommand{\F}{\mathbb{F}}
\renewcommand{\iff}{\Leftrightarrow}
\newcommand{\A}{\mathcal{A}}
\newcommand{\cA}{\mathcal{A}}
\newcommand{\cB}{\mathcal{B}}
\newcommand{\cC}{\mathcal{C}}
\newcommand{\cS}{S}
\def\checkmark{\tikz\fill[scale=0.4](0,.35) -- (.25,0) -- (1,.7) -- (.25,.15) -- cycle;}
\renewcommand{\and}{\qquad\text{and}\qquad}
\newcommand{\VG}{\operatorname{VG}}
\renewcommand{\P}{\mathbb{P}}
\newcommand{\rk}{\operatorname{rk}}
\newcommand{\gr}{\operatorname{gr}}
\newcommand{\Cor}{\operatorname{C}}
\newcommand{\abs}[1]{\left|#1\right|}               % absolute value delimiters
\newcommand{\set}[1]{\left\{#1\right\}}             % parentheses around a set
\newcommand{\angl}[1]{\left<#1\right>}              % inner product
\newcommand{\drawline}[2]
{
\draw ($ #1 ! -0.5 ! #2 $) -- ($ #1 ! 1.5 ! #2 $);
}
\begin{document}
\spacing{1.2}
\noindent{\Large\bf Cleanliness and the Varchenko--Gelfand algebra}\\

\noindent{\bf Graham Denham}\footnote{Partially supported by a grant from NSERC of Canada}\\
Department of Mathematics, University of Western Ontario, London, ON
\vspace{.1in}

\noindent{\bf Galen Dorpalen-Barry}\footnote{Supported by NSF grant DMS-2039316.}\\
Department of Mathematics, Texas A\&M University, College Station, TX
\vspace{.1in}

\noindent{\bf Nicholas Proudfoot}\footnote{Supported by NSF grants DMS-2039316 and DMS-2344861.}\\
Department of Mathematics, University of Oregon, Eugene, OR\\

{\small
\begin{quote}
\noindent {\em Abstract.} 
A central question in the theory of hyperplane arrangements is when the complement of a complex
arrangement is $K(\pi,1)$.  
Barkley and Speyer introduced a class of real arrangements that are called ``clean,'' and
Yoshinaga proved that every real arrangement whose complexification is $K(\pi,1)$ is clean.
We show that cleanliness is equivalent to a natural statement about the Varchenko--Gelfand ring, which in practice allows
for fast calculation.  We conclude with an investigation of the relationships between various properties of arrangements, including cleanliness and the $K(\pi,1)$ property.
\end{quote} }

\section{Introduction}
It is a long-standing open problem to determine which complex hyperplane arrangement complements are $K(\pi,1)$, meaning that their higher homotopy groups
vanish.  In the case where the hyperplane arrangement is the complexification of a real hyperplane arrangement,
the homotopy type of the complement is determined by the associated oriented matroid \cite[Theorem 1]{salvetti}, and therefore the $K(\pi,1)$ problem
must have a combinatorial answer.  See \cite{Falk-Randell2,FR86,yoshinaga-survey} for surveys and partial results.

Recently, Yoshinaga proved that, if $\cA$ is a real hyperplane arrangement and the complement of the complexification of $\cA$
is $K(\pi,1)$, then $\cA$ is {\bf clean} in the sense of Barkley and Speyer \cite{Barkley-Speyer,yoshinaga-spheres}. 
In the first part of this paper, we give an algebraic reformulation of cleanliness,
which we now describe.

Let $\cA$ be a finite set of hyperplanes in a real vector space $V$, and let $\F$ be any field.  The {\bf Varchenko--Gelfand algebra}
$\VG(\cA,\F)$ is by definition the ring of locally constant $\F$-valued functions on the complement of the union of hyperplanes.
This is a boring ring (it is isomorphic to a direct sum of one copy of $\F$ for each chamber), but it admits an interesting presentation
whose generators are the {\bf Heaviside functions}: there are two such functions for each hyperplane, taking the value 1 on one side
of the hyperplane and 0 on the other side.  The ring $\VG(\cA,\F)$ is filtered, with the $p^\text{th}$ filtered piece consisting as functions
that can be expressed as polynomials of degree at most $p$ in the Heaviside functions.  The associated graded algebra,
which is also called the {\bf Cordovil algebra},
is isomorphic to the cohomology ring of the complement of the union of the subspaces $H\otimes\R^3\subset V\otimes\R^3$ \cite{moseley,DBPW}.
We say that the Varchenko--Gelfand algebra is {\bf quadratic} if all relations among the Heaviside functions are generated by those of degree
at most 2.  Similarly, we say that the Cordovil algebra is quadratic if all relations among the corresponding generators are generated by those of
degree 2.  
Our main results (Theorem \ref{main} and Corollary \ref{cor:cordovil}) say that the following implications hold:
$$\text{$\Cor(\cA,\F)$ is quadratic $\Longrightarrow$ $\VG(\cA,\F)$ is quadratic $\Longleftrightarrow$ $\cA$ is clean.}$$

\begin{remark}\label{rem:speed}
At first sight, cleanliness (which is formulated combinatorially) might seem easier to work with than the condition that 
$\VG(\cA,\F)$ is quadratic.  In fact, our experience is that the algebraic condition is much faster to check, since computers are very good
at using Gr\"obner bases to determine whether or not two ideals are equal.
For example, let $\cA$ be the arrangement whose normal vectors are given by the columns of the following matrix:
\[
\left(
\begin{array}{cccccccccccccccccccc}
3 & 3 & 3 & 3 & 3 & 9 & 7 & 5 & 7 & 2 & 0 & 0 & 6 & 3 & 4 & 8 & 6 & 2 & 9 & 5 \\
8 & 1 & 7 & 1 & 2 & 8 & 2 & 6 & 1 & 8 & 5 & 9 & 2 & 8 & 3 & 0 & 1 & 0 & 8 & 9 \\
1 & 9 & 1 & 9 & 5 & 2 & 5 & 9 & 3 & 7 & 7 & 3 & 6 & 6 & 4 & 0 & 9 & 1 & 5 & 9 \\
1 & 0 & 1 & 4 & 1 & 1 & 7 & 2 & 4 & 1 & 3 & 9 & 2 & 8 & 0 & 8 & 7 & 1 & 2 & 3 \\
\end{array}
\right)
\]
It took about 1.65 seconds for Macaulay2 to determine that the Varchenko--Gelfand ideal is not quadratic.
On the other hand, it took 3 days, 23 hours, 21 minutes, and 22 seconds for Sage to check cleanliness directly.\footnote{Our 
Sage implementation could admit many improvements; but, even with considerable effort, it is unlikely that we could beat the time of the 
easy Macaulay2 calculation.}
\end{remark}

Our primary motivation for Theorem \ref{main} is to be able to perform fast calculations, 
and in particular to probe the question of how close cleanliness is to the $K(\pi,1)$ property. 
The following examples illustrate the type of calculations that are made possible by our result.

\begin{example}
Let $\cA$ be the arrangement in $\mathbb{R}^6$ consisting of all hyperplanes of the from $x_i = x_j$ for $1\leq i<j\leq 6$, 
together with the hyperplanes $x_i + x_j = 0$ whenever $j - i$ is prime.
(This is an intentionally unmotivated condition that is meant to produce a somewhat random arrangement lying in between
the Coxeter arrangements of type $A_5$ and $D_6$.)
The ring $\VG(\cA,\Q)$ is not quadratic, and therefore $\cA$ is not $K(\pi,1)$.
This can be checked in Macaulay22 in about 30 seconds.
\end{example}

\begin{example}
  For $t\in\mathbb{R}$, let $\cA_t$ be the arrangement with hyperplanes
  \begin{align*}
  x_1 - x_2 & = 0,\;\; x_1 - x_3 = 0,\;\; x_2 - x_3 = 0,\;\; x_1 = 0,\;\; x_2 = 0,\;\; x_3 = 0, \\ x_1 - t\thinspace x_2 & = 0,\;\; x_1 - t \thinspace x_3 = 0,\;\; x_2 - t \thinspace x_3 = 0\,.
  \end{align*}
When $t\in\{-1,0,1\}$, these arrangements have quadratic Varchenko--Gelfand algebras, and are therefore clean.
Edelman--Reiner, however, show that these arrangements are \emph{not} $K(\pi,1)$ \cite[Theorem 2.1]{Edelman-Reiner3}.
\end{example}

\begin{example}
  Let $\cA$ be the {\bf bracelet arrangement} with hyperplanes
  \begin{align*}
  x_1 = 0,\;\; & x_2 = 0,\;\; x_3 = 0,\;\; x_1+x_4 = 0,\;\; x_2+x_4 = 0,\;\; x_3+x_4 = 0, \\ & x_1+x_2+x_4 = 0,\;\; x_1+x_3+x_4 = 0,\;\; x_2+x_3+x_4 = 0\,.
  \end{align*}
  This is the smallest known non-tame arrangement (see \cite{Ab25} for
  background).
Then $\VG(\cA;\Q)$ is quadratic, thus $\cA$ is clean.  
It is not known to the authors whether or not $\cA$ is $K(\pi,1)$.
Yoshinaga's theorem gives supporting evidence that it could be.
\end{example}

Section \ref{sec:relationships} is devoted to relating cleanliness to other algebraic, topological, and combinatorial conditions.  We define what it means
for a matroid to be {\bf chordal}, generalizing the notion of a chordal graph.  We then say that $\cA$ is chordal if its underlying matroid is chordal.
We prove that every real, chordal arrangement is clean (Theorem \ref{thm:chordal-means-yoshi}).   
We also provide a proof (communicated to us by Paul M\"ucksch) that every clean arrangement is {\bf formal}.
The converses to these two theorems are false (Example \ref{ex:x2-arr} and Remark \ref{formal converse}), but for graphical arrangements,
chordality, formality, cleanliness, and the $K(\pi,1)$ property are all equivalent (Corollary \ref{cor:yoshi-typeA}).
Finally, we provide a chart that illustrates the implications between various properties known to be related to the $K(\pi,1)$ property, including chordality, formality,
cleanliness, and more.

\subsection*{Acknowledgements}
The authors would like to thank Nick Addington, Grant Barkley, Mike Falk, Paul M\"ucksch, and Takuya Saito for their valuable contributions.

\section{Cleanliness and the Varchenko--Gelfand algebra}\label{sec:theory}
The main purpose of this section is to state and prove Theorem \ref{main} and Corollary \ref{cor:cordovil}.
Let $V$ be a real vector space of dimension $r$, $\cA$ a finite set of distinct hyperplanes in $V$ intersecting only at the origin (a {central, essential} arrangement), and
$M_d(\cA)$ the complement of the union of the subspaces $H\otimes \R^d\subset V\otimes \R^d$ for all $H\in\cA$.
In particular, $M_1(\cA)$ is the complement of $\cA$ (a union of contractible chambers), 
$M_2(\cA)$ is the complement of the complexification of $\cA$, 
and $M_3(\cA)$ is a space with cohomology ring isomorphic to the Cordovil algebra.
Let $\cC(\cA)$ be the set of chambers of $\cA$, that is, the connected components of $M_1(\cA)$.

\subsection{Cleanliness}
We begin by choosing coorientations of each element of $\cA$.  That is, for each $H\in\cA$, we write $H^+$ to denote 
one of the two connected components of $V\setminus H$, and $H^-$ to denote the other one.
To match the conventions in \cite{Barkley-Speyer}, we choose our coorientations in such a way so that the intersection of
all of the positive half-spaces is nonempty.
For any sign vector $\eps\in\{\pm\}^\cA$ and any subset $\cS\subset\cA$, let
\[
H_\cS^\eps := \bigcap_{H\in\cS}H^{\eps_H}.
\]
We say that $\eps$ is {\bf \boldmath{$k$}-consistent} if, for any subset $\cS$ of cardinality at most $k+1$, we have $H_\cS^\eps\neq\emptyset$.
Let $\Sigma_k = \Sigma_k(\cA)$ denote the set of $k$-consistent sign vectors, and let $\sigma_k := \abs{\Sigma_k}$.  
All sign vectors lie in $\Sigma_1$, and $\Sigma_r$ is naturally in bijection with $\cC(\cA)$, hence we have
\[
2^{\abs{\cA}} = \sigma_1 \geq \sigma_2 \geq \cdots \geq \sigma_{r - 1} \geq  \sigma_{r} = \abs{\cC(\cA)}.
\]
We say that $\cA$ is {\bf clean} if $\sigma_2 = \sigma_r$.

\begin{remark}
Our assumption that the intersection of all of the positive half spaces are nonempty implies that, if $\eps$ is a sign vector and
$S\subset\cA$ is a set of cardinality 3 with $H_S^\eps=\emptyset$, then the restriction of $\eps$ to $S$ 
either takes the value $+$ twice and $-$ once, or vice-versa.
In the terminology of \cite[Section 2.1]{Barkley-Speyer}, the sign vector $\eps$ is {\bf closed} if there does not exist 
such an $S$ such that the restriction of $\eps$ to $S$ takes the value $+$ twice, and it is {\bf coclosed} if there does not exist 
such an $S$ such that the restriction of $\eps$ to $S$ takes the value $-$ twice.
The sign vector $\eps$ is {\bf biclosed} if it is both closed and coclosed, which means that there is no set $S$ of cardinality 3 with $H_S^\eps=\emptyset$,
or equivalently that $\eps\in\Sigma_2(\cA)$.
Finally, $\eps$ is {\bf separable} if $H_{\!\cA}^\eps\neq\emptyset$, or equivalently if $\eps\in\Sigma_r(\cA)$.  Thus cleanliness is precisely the statement that every biclosed sign
vector is separable.
\end{remark}

Our interest in clean arrangements comes from the following result \cite[Theorem 5.1(2)]{yoshinaga-spheres}.

\begin{theorem}\label{thm:yoshinaga}
If $M_2(\cA)$ is $K(\pi,1)$, then $\cA$ is clean.
\end{theorem}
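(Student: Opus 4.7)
The plan is to prove the contrapositive: if $\sigma_2 > \sigma_r$, then $M_2(\cA)$ has a nontrivial higher homotopy group, contradicting the $K(\pi,1)$ hypothesis. Since $\sigma_2 \geq \sigma_3 \geq \cdots \geq \sigma_r$, the strict inequality $\sigma_2 > \sigma_r$ forces $\sigma_k > \sigma_{k+1}$ for some $k$ with $2 \leq k \leq r-1$. Fix such a $k$ and a sign vector $\eps \in \Sigma_k \setminus \Sigma_{k+1}$: there exists a subset $\cS \subseteq \cA$ of cardinality $k+2$ with $H_\cS^\eps = \emptyset$, while $k$-consistency guarantees $H_T^\eps \neq \emptyset$ for every proper $T \subsetneq \cS$.

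The nerve of the family of convex sets $\{H^{\eps_H}\}_{H \in \cS}$ is then exactly the boundary $\partial \Delta^{k+1}$ of a $(k+1)$-simplex, a topological $k$-sphere. My next step is to promote this combinatorial sphere to a continuous map $f_\eps : S^k \to M_2(\cA)$. Working in the Salvetti complex $\Sala$ as a model for $M_2(\cA)$, I would choose a representative point inside each nonempty intersection $H_T^\eps$, perturb it into the imaginary direction so that it lies in the complement of the complexification, and glue the resulting points along cells of $\Sala$ that mirror the face structure of $\partial \Delta^{k+1}$. The coorientations encoded by $\eps$ determine a consistent pattern for these gluings.

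The main obstacle is to verify that $[f_\eps] \neq 0$ in $\pi_k(M_2(\cA))$. My plan here is to pair $f_\eps$ with an explicit class in $H^k(M_2(\cA))$. The Orlik--Solomon presentation of $H^*(M_2(\cA))$ is combinatorial, and from the circuit structure imposed by $\cS$ one can extract a broken-circuit monomial or related element supported on $\cS$; its evaluation against $(f_\eps)_*[S^k]$ can then be computed by a transversality or linking argument and checked to be nonzero. Since $k \geq 2$, any such nontrivial class in $\pi_k(M_2(\cA))$ is incompatible with $K(\pi,1)$, completing the contrapositive. The delicate technical point throughout is matching the nerve-theoretic sphere with Salvetti cells in a way that is rigid enough to trace its homotopy class, rather than allowing it to collapse inside $M_2(\cA)$.
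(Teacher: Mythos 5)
The paper does not actually prove this theorem: it is Yoshinaga's result, cited as \cite[Theorem 5.1(2)]{yoshinaga-spheres}, so there is no in-house proof to compare against. Your construction of the sphere does follow the strategy suggested by the title of Yoshinaga's paper: picking $\eps\in\Sigma_k\setminus\Sigma_{k+1}$, noting that the nerve of $\{H^{\eps_H}\}_{H\in\cS}$ is $\partial\Delta^{k+1}\simeq S^k$, and pushing it into $M_2(\cA)$. That part of the plan is reasonable.

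The gap is in the detection step. You propose to show $[f_\eps]\neq 0$ in $\pi_k(M_2(\cA))$ by pairing with a class in $H^k(M_2(\cA))$. This cannot work, for two reasons. First, structurally: $H^k(M_2(\cA))$ is the Orlik--Solomon algebra, which depends only on the underlying matroid, whereas whether or not such an $\eps$ exists depends on the oriented matroid. So ordinary (co)homology of $M_2(\cA)$ has no room to register the phenomenon you want to detect. Second, concretely: for a generic central essential arrangement of $n$ hyperplanes of rank $\ell$, Hattori's theorem identifies the essential factor of $M_2(\cA)$ with the $(\ell-1)$-skeleton of the torus $T^{n-1}$; the nerve sphere lifts to the boundary of an $\ell$-cube in the universal cover (the cubical $(\ell-1)$-skeleton of $\R^{n-1}$), and when you push that boundary down to the torus it becomes $\sum_i (F_i^+-F_i^-)=0$ because opposite faces are identified. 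So the Hurewicz image of $f_\eps$ in $H_{\ell-1}(M_2(\cA))$ vanishes, and no cohomology pairing can see it. The nontriviality of $[f_\eps]$ has to be established in the universal cover of the Salvetti complex (equivalently, in homology with $\Z[\pi_1]$-coefficients), which is the genuinely hard part of Yoshinaga's argument and the part your proposal leaves unaddressed; your closing sentence acknowledges the worry but does not resolve it.
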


Note that the converse to \Cref{thm:yoshinaga} is false \cite[Example 5.5]{yoshinaga-spheres}.

\subsection{The Varchenko--Gelfand algebra}
Fix a field $\F$.  The {\bf Varchenko--Gelfand algebra} $\VG(\cA,\F)$ is defined to be the ring of locally constant functions from $M_1(\cA)$ to $\F$.
This is simply a direct sum of $\sigma_d$ copies of $\F$, one for each chamber of $\cA$.
However, this boring ring has an interesting presentation, which we now describe.

Consider the commutative $\F$-algebra
\[
R := \F[e_H^+\mid H\in\cA]\big{/}\angl{(e_H^+)^2 - e_H^+\mid H\in\cA}
\]
generated by one idempotent class for each hyperplane.
We will also define $e_H^- := 1 - e_H^+\in R$, so that $e_H^- e_H^+ = 0$ and $e_H^- + e_H^+ = 1$.  
Given a sign vector $\eps\in\{\pm\}^\cA$ and a subset $\cS\subset\cA$, let
\[
f_\cS^\eps := \prod_{H\in\cS} e_H^{\eps_H}\in R.
\]
Then $\{f_\cA^\eps\mid \text{$\eps$ a sign vector}\}$
is an additive basis of pairwise orthogonal idempotents in $R$.

There is a surjective $\F$-algebra homomorphism
$\varphi:R\to\VG(\cA,\F)$ taking $e_H^{\pm}$ to the {\bf Heaviside function} that takes the value 1 on $H^{\pm}$ and 0 on $H^{\mp}$.
Let us try to understand the kernel of $\varphi$.
$H_\cS^\eps = \emptyset$, then $f_\cS^{\eps}$ lies in the kernel of $\varphi$.
If $-\eps$ is the opposite sign vector, then $H_\cS^{-\eps} = - H_\cS^\eps = \emptyset$, so $f_\cS^{-\eps}$
also lies in the kernel of $\varphi$.
Let $g_{\cS}^\eps := f_{\cS}^\eps - f_{\cS}^{-\eps}$,
which has the property that
\[
f_{\cS}^\eps = e_H^{\eps_H} g_{\cS}^\eps \and f_{\cS}^{-\eps} = -e_H^{-\eps_H} g_{\cS}^\eps
\]
for any $H\in\cS$.
The following theorem of Varchenko and Gelfand \cite[Theorem 6]{VarchenkoGelfand} says that these classes generate the kernel.

\begin{theorem}\label{thm:VG}
The kernel of $\varphi$ is generated by the classes $g_\cS^{\eps}$ for all $\eps$ and $\cS$ such that $H_\cS^\eps = \emptyset$.
\end{theorem}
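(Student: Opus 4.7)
The plan is to prove the two inclusions separately. One is essentially already done in the surrounding discussion: if $H_\cS^\eps = \emptyset$ then $\varphi(f_\cS^\eps) = 0$ and, by centrality, $H_\cS^{-\eps} = -H_\cS^\eps = \emptyset$ as well, so $\varphi(f_\cS^{-\eps}) = 0$, giving $\varphi(g_\cS^\eps) = 0$. Writing $I \subseteq R$ for the ideal generated by the classes $g_\cS^\eps$ with $H_\cS^\eps = \emptyset$, this shows $I \subseteq \ker \varphi$.

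For the reverse inclusion, I would reduce to the claim that $f_\cA^\eps \in I$ for every sign vector $\eps$ with $H_\cA^\eps = \emptyset$ (that is, for every non-tope). The reduction works because $\{f_\cA^\eps\}_{\eps \in \{\pm\}^\cA}$ is an $\F$-basis of $R$ consisting of pairwise orthogonal idempotents summing to $1$, and $\varphi$ sends each $f_\cA^\eps$ either to the indicator function of the corresponding chamber (when $\eps$ is a tope) or to zero. Once the claim is established, the quotient $R/I$ is spanned by the images of the tope idempotents, so $\dim R/I \leq |\cC(\cA)|$; combined with the surjection $R/I \twoheadrightarrow \VG(\cA, \F)$ coming from the easy direction, equality is forced and $I = \ker \varphi$.

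The key computation is the identity $e_H^{\eps_H} \cdot g_\cS^\eps = f_\cS^\eps$ for any $H \in \cS$: the first summand is unchanged because $(e_H^{\eps_H})^2 = e_H^{\eps_H}$ and $e_H^{\eps_H}$ already appears as a factor in $f_\cS^\eps$, while the second vanishes because $e_H^{\eps_H} \cdot e_H^{-\eps_H} = 0$. Consequently, whenever $H_\cS^\eps = \emptyset$, we have $f_\cS^\eps \in I$, and hence $f_\cA^\eps = f_\cS^\eps \cdot \prod_{H \notin \cS} e_H^{\eps_H} \in I$ as well. Applied with $\cS = \cA$, this settles the reduction. There is no serious obstacle beyond spotting this identity; the only care needed is in the sign-vector bookkeeping and in invoking centrality to pair $\eps$ with $-\eps$.
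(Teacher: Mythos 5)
Your argument is correct. Note, however, that the paper does not prove this result at all: it cites it as \cite[Theorem 6]{VarchenkoGelfand} and moves on. So there is no paper proof to compare against directly. What can be said is that your argument is very much in the spirit of the tools the paper develops immediately afterward. The two ingredients you isolate --- the identity $f_\cS^\eps = e_H^{\eps_H}\,g_\cS^\eps$ (for $H\in\cS$), and the fact that $\{f_\cA^\eps\}_\eps$ is an additive basis of pairwise orthogonal idempotents with $\varphi(f_\cA^\eps)$ the chamber indicator or zero --- are exactly the ingredients used in the proof of Lemma~\ref{basis}. Indeed, specializing Lemma~\ref{basis} to $k=r$ gives $I_r = \F\{f_\cA^\eps \mid \eps\notin\Sigma_r\}$, and combined with the easy inclusion (the one you verify via centrality, so that $H_\cS^{-\eps}=-H_\cS^\eps$) this identifies $\ker\varphi$ with the span of the non-tope idempotents; your dimension count is an equivalent packaging. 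One small stylistic remark: for the reduction you only need the case $\cS=\cA$, where $H_\cA^\eps=\emptyset$ directly gives $g_\cA^\eps\in I$ and hence $f_\cA^\eps = e_H^{\eps_H}g_\cA^\eps\in I$; the general-$\cS$ version is harmless but not needed there. Overall the proposal is sound and matches the paper's toolkit.
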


In order to relate cleanliness to the Varchenko--Gelfand ring, we introduce a family of smaller ideals that sit inside the kernel of $\varphi$.
For any $k$, we define the {\bf \boldmath{$k^\text{th}$} intermediate Varchenko--Gelfand ideal}
\[
I_k := \angl{g_{\cS}^\eps \mid \text{$H_\cS^\eps = \emptyset$ and $\abs{\cS}\leq k+1$}}\subset R,
\]
and the {\bf \boldmath{$k^\text{th}$} intermediate Varchenko--Gelfand algebra} $\VG_k(\cA,\F) := R/I_k$. 
We have containments
\[
0 = I_1 \subset I_2 \subset\cdots\subset I_{r-1}\subset I_r = \ker(\varphi),
\]
along with quotients
\[
R = \VG_1(\cA,\F)\twoheadrightarrow \VG_2(\cA,\F)\twoheadrightarrow\cdots\twoheadrightarrow\VG_{r-1}(\cA,\F)\twoheadrightarrow\VG_{r}(\cA,\F)=\VG(\cA,\F).
\]

The following lemma gives an additive basis for the ideal $I_k$.

\begin{lemma}\label{basis}
We have $I_k = \F\{f_\cA^\eps\mid \eps\notin\Sigma_k\}$.
\end{lemma}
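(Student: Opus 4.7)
The plan is to exploit the basis of orthogonal idempotents $\{f_\cA^\eta\}$ and to compute how the generators $g_\cS^\eps$ act on it. The key identity to establish first is that, for any $\cS \subseteq \cA$, any sign vector $\eta$ on $\cS$, and any global sign vector $\delta \in \{\pm\}^\cA$,
\[
f_\cS^\eta \cdot f_\cA^\delta = \begin{cases} f_\cA^\delta & \text{if } \delta|_\cS = \eta, \\ 0 & \text{otherwise.}\end{cases}
\]
This is immediate from the defining relations $e_H^+ e_H^- = 0$ and $(e_H^\pm)^2 = e_H^\pm$. Consequently, for $\cS \ne \emptyset$,
\[
g_\cS^\eta \cdot f_\cA^\delta = \begin{cases} f_\cA^\delta & \text{if } \delta|_\cS = \eta, \\ -f_\cA^\delta & \text{if } \delta|_\cS = -\eta, \\ 0 & \text{otherwise.}\end{cases}
\]

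For the containment $I_k \subseteq \F\{f_\cA^\eps \mid \eps \notin \Sigma_k\}$, I would argue as follows. Since the idempotents $f_\cA^\delta$ form an $\F$-basis of $R$, the ideal $I_k$ is spanned by products $g_\cS^\eta \cdot f_\cA^\delta$ with $H_\cS^\eta = \emptyset$ and $|\cS| \le k+1$. By the identity above, such a product is either zero or $\pm f_\cA^\delta$ with $\delta|_\cS = \pm\eta$. In the nonzero case, $H_\cS^{\delta|_\cS}$ equals $H_\cS^\eta$ (up to sign) and hence is empty, so $\cS$ is a witness of size $\le k+1$ to the failure of $k$-consistency of $\delta$, i.e., $\delta \notin \Sigma_k$.

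For the reverse containment, take any $\eps \notin \Sigma_k$. By definition there exists $\cS \subseteq \cA$ with $|\cS| \le k+1$ and $H_\cS^{\eps|_\cS} = \emptyset$; note $\cS$ is necessarily nonempty. Setting $\eta := \eps|_\cS$, the generator $g_\cS^\eta$ lies in $I_k$, and the identity above gives $g_\cS^\eta \cdot f_\cA^\eps = f_\cA^\eps$. Hence $f_\cA^\eps \in I_k$, completing the proof.

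I do not anticipate a real obstacle here: the whole argument is bookkeeping with the orthogonal idempotents, once one notices that $g_\cS^\eta$ acts as a signed characteristic projection onto those $f_\cA^\delta$ with $\delta|_\cS \in \{\eta,-\eta\}$. The only subtlety worth double-checking is the case $\cS = \emptyset$ (which is vacuously $k$-consistent, so it never contributes to $I_k$) and the dependence on the choice of sign of $\eta$ versus $-\eta$, both of which are handled uniformly because $g_\cS^{-\eta} = -g_\cS^\eta$.
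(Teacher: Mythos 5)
Your proof is correct and takes essentially the same approach as the paper: both rely on the observation that $\{f_\cA^\delta\}$ is a basis of pairwise orthogonal idempotents and on the resulting formula for $g_\cS^\eta\cdot f_\cA^\delta$. The paper organizes the forward containment by noting $\F\{f_\cA^\eps\mid\eps\notin\Sigma_k\}$ is an ideal and checking that each generator $g_\cS^\delta = f_\cS^\delta - f_\cS^{-\delta}$ expands as a sum of basis idempotents with $\eps\notin\Sigma_k$, whereas you span $I_k$ directly by the products $g_\cS^\eta f_\cA^\delta$; this is a cosmetic difference in bookkeeping, not a genuinely different route.
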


\begin{proof}
If $\eps\notin\Sigma_k$, then there is a subset $\cS\subset \cA$ of cardinality $k+1$ such that $H_\cS^\eps=\emptyset$, and
therefore $g_\cS^\eps\in I_k$.  We have already observed that $f_\cS^\eps$ is a multiple of $g_\cS^\eps$, and $f_\cA^\eps$
is by definition a multiple of $f_\cS^\eps$, so we also have $f_\cA^\eps\in I_k$.  This proves that $\F\{f_\cA^\eps\mid \eps\notin\Sigma_k\}\subset I_k$.

Next, we prove the opposite inclusion.  Since $\{f_\cA^\eps\mid \text{$\eps$ a sign vector}\}$
is an additive basis of pairwise orthogonal idempotents in $R$, $\F\{f_\cA^\eps\mid \eps\notin\Sigma_k\}$ is an ideal, and therefore it is sufficient to show that
the generators of $I_k$ are contained in $\F\{f_\cA^\eps\mid \eps\notin\Sigma_k\}$.

Let $\cS$ be a set of cardinality at most $k+1$ and $\delta$ a sign vector such that $H_\cS^\delta = \emptyset$.
We have
\[f_\cS^\delta = \sum_{\delta|_\cS = \eps|_\cS} f_\cA^\eps.
\]
For all $\eps$ such that $\delta|_\cS = \eps|_\cS$, we have $H_\cS^\eps = H_\cS^\delta = \emptyset$, and therefore $\eps\notin\Sigma_k$.
Thus we have established that $f_\cS^\delta\in \F\{f_\cA^\eps\mid \eps\notin\Sigma_k\}$.  By symmetry, we also have $f_\cS^{-\delta}\in\F\{f_\cA^\eps\mid \eps\notin\Sigma_k\}$,
and therefore $g_\cS^\delta = f_S^\delta - f_S^{-\delta} \in \F\{f_\cA^\eps\mid \eps\notin\Sigma_k\}$.  This completes the proof.
\end{proof}

\begin{theorem}\label{main}
For all $k$, $\sigma_k = \dim \VG_k(\cA,\F)$.  In particular, $\cA$ is clean if and only if $I_2 = I_r$.
\end{theorem}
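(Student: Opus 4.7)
The first assertion will follow almost immediately from \Cref{basis}, so the plan is to set this up and then extract the equivalence with Yoshinaga's criterion as a dimension count.

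First, I would recall that the classes $\{f_\cA^\eps \mid \eps \in \{\pm\}^\cA\}$ form an additive basis of $R$ consisting of pairwise orthogonal idempotents, as noted earlier in the section. In particular $\dim_\F R = 2^{|\cA|}$. Applying \Cref{basis}, the set $\{f_\cA^\eps \mid \eps \notin \Sigma_k\}$ is a subset of this basis and spans $I_k$, so it is an $\F$-basis of $I_k$. Counting gives
\[
\dim_\F I_k = 2^{|\cA|} - \sigma_k,
\]
and hence $\dim_\F \VG_k(\cA,\F) = \dim_\F R - \dim_\F I_k = \sigma_k$, proving the first statement.

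For the second statement, Yoshinaga's criterion is the equality $\sigma_2 = \sigma_r$, which by what we just proved is equivalent to $\dim_\F \VG_2(\cA,\F) = \dim_\F \VG_r(\cA,\F)$. Since we have the containment $I_2 \subseteq I_r$ in $R$, the corresponding quotient map $\VG_2(\cA,\F) \twoheadrightarrow \VG_r(\cA,\F)$ is an isomorphism if and only if its source and target have the same (finite) dimension, and this in turn is equivalent to $I_2 = I_r$.

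There is no real obstacle here: everything has been reduced to bookkeeping once \Cref{basis} is in hand. The only subtlety worth flagging explicitly in the writeup is that the equivalence ``$\dim \VG_2 = \dim \VG_r \iff I_2 = I_r$'' uses both the containment $I_2 \subseteq I_r$ and the finite-dimensionality of $R$; otherwise, equality of dimensions would not force equality of the ideals.
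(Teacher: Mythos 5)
Your argument is correct and follows the same approach as the paper: both deduce the dimension statement directly from \Cref{basis} by identifying a basis of $\VG_k(\cA,\F)$ (equivalently, of $I_k$) from the orthogonal idempotents $f_\cA^\eps$, and both extract the equivalence with Yoshinaga's criterion by combining the containment $I_2\subseteq I_r$ with finite-dimensionality. You simply spell out the bookkeeping that the paper leaves implicit.
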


\begin{proof}
By Lemma \ref{basis}, the set $\{f_\cA^\eps\mid\eps\in\Sigma_k\}\subset R$ descends to a basis for $\VG_k(\cA,\F)$.
\end{proof}

\subsection{The Cordovil algebra}
One reason for studying the Varchenko--Gelfand algebra is that it admits a natural filtration whose associated graded is of independent interest.
Consider the increasing filtration of $R$ whose degree $p$ piece consists of all classes that can be expressed as polynomials
of degree at most $p$ in the generators $e_H^\pm$, and let
\[
\bar R := \F[e_H\mid H\in\cA]\big{/}\langle e_H^2\mid H\in\cA\rangle
\]
be the associated graded algebra with respect to this filtration.  For any element $g\in R$, we write $\bar g\in \bar R$ to denote the {\bf symbol}
of $f$.  In concrete terms, this means that we express $f$ as a polynomial in the classes $e_H^+$, take the part of maximal degree, and replace
each $e_H^+$ with $e_H$.

For any ideal $I\subset R$, let $\bar I := \langle \bar g\mid g\in I\rangle$.
Our filtration of $R$ induces a filtration of $R/I$, and the associated graded algebra is isomorphic to $\bar R/\bar I$.
In particular, it induces a filtration of $\VG(\cA,\F) \cong R/I_r$, and the associated graded algebra 
\[
\Cor(\cA,\F) := \gr\VG(\cA,\F) \cong \bar R/\bar I_r
\]
is called the {\bf Cordovil algebra} (or sometimes the {\bf graded Varchenko--Gelfand algebra}) of $\cA$.
It follows from \cite[Theorem 7]{VarchenkoGelfand} that 
\[
\bar I_r = \big\langle \overline{g_{\cS}^\eps} \mid H_\cS^\eps = \emptyset\big\rangle.
\]

Just as in the filtered case, we can define intermediate versions of the Cordovil ideal.
For each $k$, we define the {\bf \boldmath{$k^\text{th}$} intermediate Cordovil ideal}
\[
J_k := \big\langle \overline{g_{\cS}^\eps} \mid \text{$H_\cS^\eps = \emptyset$ and $\abs{\cS}\leq k+1$}\big\rangle\subset \bar I_k.
\]
We have $J_1 = 0 = \bar I_1$ and $J_r = \bar I_r$, and $J_k \subset J_r$ is the sub-ideal generated by elements of degree at most $k$.  
In general, however,
the inclusion $J_k\subset \bar I_k$ can be proper.  That is, we have the following diagram of ideals:
\[
\begin{tikzpicture}[baseline=(current bounding box.center), node distance=0.25cm]

  % First row (top chain)
  \node (I0)    at (0,0)            {$0$};
  \node (s1)    [right=of I0]       {$=$};
  \node (I1)    [right=of s1]       {$\bar I_1$};
  \node (s2)    [right=of I1]       {$\subset$};
  \node (I2)    [right=of s2]       {$\bar I_2$};
  \node (s3)    [right=of I2]       {$\subset$};
  \node (dots)  [right=of s3]       {$\cdots$};
  \node (s4)    [right=of dots]     {$\subset$};
  \node (Ir1)   [right=of s4]       {$\bar I_{r-1}$};
  \node (s5)    [right=of Ir1]      {$\subset$};
  \node (Ir)    [right=of s5]       {$~\bar I_r$};

  % Second row (bottom chain)
  \node (J0)    [below=1cm of I0]   {$0$};
  \node (t1)    [right=of J0]       {$=$};
  \node (J1)    [right=of t1]       {$J_1$};
  \node (t2)    [right=of J1]       {$\subset$};
  \node (J2)    [right=of t2]       {$J_2$};
  \node (t3)    [right=of J2]       {$\subset$};
  \node (dots2) [right=of t3]       {$\cdots$};
  \node (t4)    [right=of dots2]    {$\subset$};
  \node (Jr1)   [right=of t4]       {$J_{r-1}$};
  \node (t5)    [right=of Jr1]      {$\subset$};
  \node (Jr)    [right=of t5]       {$J_r$};

  % Vertical equal sign between 0's
  % \node (eq0) at ($(I0)!0.5!(J0)$) {\scriptsize$\rotatebox{90}{$=$}$};

  % Vertical subseteqs J_k ⊆ \bar I_k
  \node at ($(I1)!0.5!(J1)$) {$\rotatebox{90}{$=$}$};
  \node at ($(I2)!0.5!(J2)$) {$\rotatebox{90}{$\subset$}$};
  \node at ($(Ir1)!0.5!(Jr1)$) {$\rotatebox{90}{$\subset$}$};
  \node at ($(Ir)!0.5!(Jr)$) {$\rotatebox{90}{$=$}$};

\end{tikzpicture}
\]
We define the {\bf \boldmath{$k^\text{th}$} intermediate Cordovil algebra} $\Cor_k(\cA,\F) := \bar R/J_k$,
and we have surjections
\[
\Cor_k(\cA,\F) = \bar R/J_k \twoheadrightarrow \bar R/\bar I_k \twoheadrightarrow \bar R/\bar I_r = \bar R/J_r = \Cor(\cA,\F).
\]
Theorem \ref{main} has the following corollary.

\begin{corollary}\label{cor:cordovil}
If $J_2 =  J_r$ (that is, if $\Cor(\cA,\F)$ is quadratic), then $\cA$ is clean.
\end{corollary}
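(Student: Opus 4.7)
The plan is to reduce the corollary to Theorem \ref{main}: it suffices to show that the hypothesis $J_2 = J_r$ forces $I_2 = I_r$ in the filtered ring $R$. My approach is to pass through the associated graded, where the ideals $\bar{I}_k$ are sandwiched between the $J_k$'s, and then use a dimension count to lift the equality of graded ideals back to the filtered ones.

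First I would observe that, by construction of the symbols, each $\bar{I}_k$ satisfies $J_k \subseteq \bar{I}_k$ (the $J_k$'s are generated by symbols of a specific set of generators of $I_k$), while $\bar{I}_r = J_r$ since $I_r$ is generated by \emph{all} the $g_\cS^\eps$ with $H_\cS^\eps = \emptyset$. Hence $J_2 = J_r$ immediately sandwiches to
\[
J_2 \;\subseteq\; \bar{I}_2 \;\subseteq\; \bar{I}_r \;=\; J_r \;=\; J_2,
\]
yielding $\bar{I}_2 = \bar{I}_r$ as ideals of $\bar R$.

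Next I would invoke the standard fact that for a finite-dimensional filtered algebra and any ideal $I$, the associated graded of $R/I$ is $\bar R / \gr I$, where $\gr I$ is the ideal of symbols; and moreover $\dim_\F R/I = \dim_\F \bar R/\gr I$, since passing to the associated graded preserves total dimension. In our setup $R$ is finite-dimensional (of dimension $2^{\abs{\cA}}$), and the ideal $\gr I_k$ coincides with $\bar I_k$ as defined in the excerpt, because $I_k$ is generated by homogeneous-in-filtration elements $g_\cS^\eps$ whose symbols generate $\bar I_k$. Consequently
\[
\dim_\F R/I_2 \;=\; \dim_\F \bar R/\bar I_2 \;=\; \dim_\F \bar R/\bar I_r \;=\; \dim_\F R/I_r.
\]
Combined with the known containment $I_2 \subseteq I_r$, this equality of dimensions forces $I_2 = I_r$, and Theorem \ref{main} then delivers Yoshinaga's criterion.

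The only real subtlety—and what I would take most care to spell out—is the identification $\gr I_k = \bar I_k$, i.e.\ that the ideal of symbols of \emph{all} elements of $I_k$ agrees with the ideal generated by the symbols of the specific generators $g_\cS^\eps$. This is not automatic for an arbitrary filtered presentation, but here it holds because the generators $g_\cS^\eps$ are of pure filtration degree $\abs{\cS}$ (their top-degree part is nonzero), so multiplying by an element of $R$ and taking symbols cannot produce a new leading term outside the ideal generated by the $\overline{g_\cS^\eps}$. Everything else in the argument is formal.
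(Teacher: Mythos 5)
Your main line of argument is the same as the paper's: the sandwich $J_2 \subseteq \bar I_2 \subseteq \bar I_r = J_r = J_2$ to conclude $\bar I_2 = \bar I_r$, followed by the dimension equality $\dim_\F R/I = \dim_\F \bar R/\bar I$ and the containment $I_2 \subseteq I_r$ to upgrade this to $I_2 = I_r$, then Theorem~\ref{main}. That part is correct.

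However, your final paragraph contains a claim that is false and would undermine the proof if it were actually needed. You assert that $\gr I_k$ equals the ideal generated by the symbols of the specific generators $g_\cS^\eps$ (i.e., that $\bar I_k = J_k$) for all $k$, on the grounds that each $g_\cS^\eps$ has ``pure filtration degree $|\cS|$.'' This reasoning is invalid: even if all generators have pure filtration degree, an element of $I_k$ can be a sum $\sum a_i g_i$ whose top-degree parts cancel, and the symbol of such a sum need not lie in the ideal generated by the $\overline{g_\cS^\eps}$. The paper explicitly warns that the inclusion $J_k \subset \bar I_k$ can be proper, and Example~\ref{ex:d4} gives a concrete counterexample: for the $D_4$ arrangement one has $J_3 \subsetneq \bar I_3$. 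Fortunately, your argument never actually needs $\bar I_k = J_k$ for intermediate $k$; it only needs $J_2 \subseteq \bar I_2$ (true by definition) and $\bar I_r = J_r$. But the latter is \emph{not} ``immediate since $I_r$ is generated by all the $g_\cS^\eps$''---by your own (flawed) logic that would give $\bar I_k = J_k$ for every $k$, which we just saw is false. The equality $\bar I_r = J_r$ is a genuine theorem of Varchenko--Gelfand (Theorem~7 of \cite{VarchenkoGelfand}), which the paper cites and your proof should cite as well. So: delete the erroneous ``subtlety'' paragraph, and replace your casual justification of $\bar I_r = J_r$ with a citation to \cite[Theorem~7]{VarchenkoGelfand}; the rest of the argument stands and matches the paper's.
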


\begin{proof}
From the sequence of surjections above, we see that the condition $J_2 = J_r$ implies that $\bar I_2 = \bar I_r$.
Since $\dim \bar I = \dim I$ for any ideal $I\subset R$, this implies that $I_2 = I_r$, which is equivalent to cleanliness
by Theorem \ref{main}.
\end{proof}

The converse to Corollary \ref{cor:cordovil} is false because the inclusion $J_2\subset \bar I_2$ need not be an equality.

\begin{example}\label{ex:d4}
  The $D_4$ arrangement consists of the $12$ hyperplanes in $\R^4$ given by equations $x_i\pm x_j=0$ for $1\leq i<j\leq 4$.  A
    Macaulay2~\cite{M2} calculation easily shows that $I_2=I_4$, so
    $D_4$ is clean.  A similar calculation shows
    that the Cordovil ideal $J_4$ has minimal generators
    in degrees $2$ and $4$.  That is, we have $J_2 =J_3 \subsetneq J_4 = \bar I_4 = \bar I_3 = \bar I_2$.
\end{example}

\section{Connections with other properties of arrangements}\label{sec:relationships}
In this section, we prove that
$$\text{chordal} \;\Longrightarrow\; \text{clean} \;\Longrightarrow\; \text{formal},$$
and then collect known relationships between various properties
of arrangements.

\subsection{Chordality}\label{sec:chordal}
We define a matroid to be {\bf chordal} if, for every circuit $C$ of size at least $4$, there exist circuits $D_1$ and $D_2$ such that $\abs{D_1},\abs{D_2} \geq 3$,  $\abs{D_1\cap D_2} = 1$, and
\[
    C = (D_1\cup D_2)\setminus (D_1\cap D_2)\,.
\]
This definition generalizes the definition of a chordal graph.  We say that $\cA$ is chordal if its associated matroid is chordal.

\begin{remark}
The concept of chordality for graphs goes back to Berge \cite{Berge} and Dirac \cite{Dirac}.
Stanley noticed the connection between chordal graphs and supersolvability \cite[Example 2.7, Proposition 2.8]{Stanley}.
Independently, Barhona and Grötschel introduced the notion of a chordal circuit as a way to characterize the facet-defining hyperplanes of the cycle polytope of a binary matroid \cite[p.53]{Barahona-Groetschel}.
Ziegler then showed that every binary supersolvable matroid not containing the Fano matroid is graphical \cite[Theorem 2.7]{Ziegler}.
Later Cordovil, Forge, and Klein showed that every binary supersolvable matroid is chordal \cite[Theorem 2.2]{Cordovil-Forge-Klein}.
\end{remark}

\begin{theorem}\label{thm:chordal-means-yoshi}
If $\cA$ is chordal, then $\cA$ is clean.
\end{theorem}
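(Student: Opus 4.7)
The plan is to apply Theorem~\ref{main}, which reduces Yoshinaga's criterion to the ideal equality $I_2 = I_r$, and then to show by induction on circuit size that every generator $g_\cS^\eps$ of $I_r$ lies in $I_2$. By Gordan's alternative applied to the strict system $\eps_H\alpha_H > 0$, the hypothesis $H_\cS^\eps = \emptyset$ forces $\cS$ to contain a matroid circuit $C$ for which $(C, \eps|_C)$ is a signed circuit of the oriented matroid on $\cA$. Combining the factorisation $f_\cS^\delta = f_{\cS\setminus C}^\delta\, f_C^\delta$ with $f_C^\delta = e_H^{\delta_H}\, g_C^\delta$ (valid for any $H\in C$) exhibits $g_\cS^\eps$ as an $R$-multiple of $g_C^{\eps|_C}$, so it suffices to prove that $g_C^\eps \in I_2$ for every signed circuit $(C, \eps)$.

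For this I would induct on $\abs{C}$. The base case $\abs{C}\le 3$ is immediate from the definition of $I_2$. For $\abs{C}\ge 4$, chordality provides matroid circuits $D_1, D_2$ with $D_1\cap D_2 = \{H\}$ and $C = (D_1\cup D_2)\setminus\{H\}$; setting $C_i := D_i\setminus\{H\}$, we have $C = C_1\sqcup C_2$, and since $\cA$ is simple, $3 \le \abs{D_i} < \abs{C}$, so the induction hypothesis will apply. The crux is then to lift the matroid-level decomposition to the oriented matroid: to produce signings $\eps_1, \eps_2$ of $D_1, D_2$ that agree with $\eps$ on the respective $C_i$, make each $(D_i, \eps_i)$ a signed circuit, and satisfy $(\eps_1)_H = -(\eps_2)_H$. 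This follows from the uniqueness (up to a scalar) of the linear dependence on a matroid circuit: let $\nu^{(i)}$ be such a dependence on $D_i$ and $\mu$ the one realising $(C,\eps)$; choosing scalars $a, b$ so that $a\nu^{(1)}_H + b\nu^{(2)}_H = 0$ makes $a\nu^{(1)} + b\nu^{(2)}$ a dependence supported on $C$, hence a scalar multiple of $\mu$. Comparing signs term-by-term on $C_i$ forces $a$ and $b$ to share a sign, which via the cancellation at $H$ forces $\nu^{(1)}_H$ and $\nu^{(2)}_H$ to have opposite signs; rescaling by $\pm 1$ as needed delivers the required $\eps_1, \eps_2$.

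With these signings in hand, a short computation yields the identity
\[
g_C^\eps \;=\; \bigl(f_{C_2}^\eps + f_{C_2}^{-\eps}\bigr)\, g_{D_1}^{\eps_1} \;+\; \bigl(f_{C_1}^\eps + f_{C_1}^{-\eps}\bigr)\, g_{D_2}^{\eps_2}.
\]
One expands each $g_{D_i}^{\eps_i}$ as $e_H^{(\eps_i)_H} f_{C_i}^\eps - e_H^{-(\eps_i)_H} f_{C_i}^{-\eps}$; the cross-terms involving $f_{C_1}^\eps f_{C_2}^{-\eps}$ and $f_{C_1}^{-\eps} f_{C_2}^\eps$ cancel precisely because $(\eps_1)_H = -(\eps_2)_H$, and the surviving $f_C^\eps$ and $f_C^{-\eps}$ terms combine via $e_H^+ + e_H^- = 1$ to produce $g_C^\eps$. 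By induction both $g_{D_i}^{\eps_i}$ lie in $I_2$, so $g_C^\eps \in I_2$ and the induction closes. I expect the main obstacle to be the signed lift in the second paragraph: chordality is a matroid-level hypothesis, whereas $I_r$ encodes oriented-matroid data, so the real work lies in checking that the matroid decomposition $C = (D_1\cup D_2)\setminus\{H\}$ can be realised by signed circuits that eliminate to $(C,\eps)$ at $H$.
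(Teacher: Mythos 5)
Your proof is correct, and it takes a genuinely different route from the paper. The paper argues at the level of sign vectors: it fixes $\eps\notin\Sigma_r$, takes a minimal $S$ with $H_S^\eps=\emptyset$ (implicitly a circuit), applies chordality and strong elimination to produce a strictly smaller set $D_i$ with $H_{D_i}^\eps=\emptyset$, and reaches a contradiction with minimality; the conclusion $\Sigma_2=\Sigma_r$ is then converted to $I_2=I_r$ via Theorem~\ref{main}. You instead prove $I_2=I_r$ directly by induction on circuit size, reducing an arbitrary generator $g_\cS^\eps$ to a signed-circuit generator $g_C^\eps$ by factorization, and then writing down the explicit identity
\[
g_C^\eps \;=\; \bigl(f_{C_2}^\eps + f_{C_2}^{-\eps}\bigr)\, g_{D_1}^{\eps_1} \;+\; \bigl(f_{C_1}^\eps + f_{C_1}^{-\eps}\bigr)\, g_{D_2}^{\eps_2},
\]
which I verified expands correctly using $e_H^+ + e_H^- = 1$ and the sign condition $(\eps_1)_H=-(\eps_2)_H$. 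Both proofs use chordality and oriented-matroid elimination in the same essential way, namely to split a signed circuit $(C,\eps)$ into two smaller signed circuits $(D_i,\eps_i)$ that agree with $\eps$ on $C_i$ and disagree at $H$; your derivation of the signed lift via the uniqueness of linear dependences on circuits is sound, though the phrase ``forces $a$ and $b$ to share a sign'' would be cleaner if you simply fixed $\nu^{(1)}_H=-\nu^{(2)}_H$, observed that $\nu^{(1)}+\nu^{(2)}$ is a nonzero multiple of $\mu$, and replaced $(\nu^{(1)},\nu^{(2)})$ by $(-\nu^{(1)},-\nu^{(2)})$ if that multiple is negative. The trade-off: the paper's contradiction argument is shorter, while your approach is constructive, producing an explicit syzygy expressing each higher-degree Varchenko--Gelfand generator in terms of quadratic ones, which is a stronger statement than the bare ideal equality.
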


\begin{proof}
Let $\cA$ be a chordal arrangement of rank $r$, and consider a sign vector $\epsilon\in\{\pm\}^\cA$ such that $\epsilon\notin\Sigma_r$.
This means that there is a subset $S\subset\cA$ such that $H_S^\epsilon=\emptyset$.  Furthermore, we may take $S$ to be of smallest possible
cardinality with this property.  If $|S| = 3$, then $\epsilon\notin\Sigma_2$, which is what we want to show.  Assume now for the sake of contradiction that $|S|>3$.

By chordality, there exist circuits
$D_1$ and $D_2$ with $D_1 \cap D_2 = \{H\}$ and $S = D_1 \cup D_2 \setminus\{H\}$ for some $H\in \cA$.
Since $D_1$ and $D_2$ are circuits, there exist sign vectors $\epsilon_1,\epsilon_2\in\{\pm\}^\cA$ such that
$$H_{D_1}^{\epsilon_1} = \emptyset = H_{D_2}^{\epsilon_2}.$$
We may assume without loss of generality that $\epsilon$ and $\epsilon_1$ agree on at least one element of $S\cap D_1$
(otherwise, replace $\epsilon_1$ with $-\epsilon_1$).
We may also assume without loss of generality that $(\epsilon_1)_H \neq (\epsilon_2)_H$ (otherwise, replace $\epsilon_2$ with $-\epsilon_2$).
Then the strong elimination property for oriented matroids implies that, for $i\in\{1,2\}$, $\epsilon_{H_i} = (\epsilon_i)_{H_i}$ for any $H_i\in S\cap D_i$.

Choose the unique $i\in\{1,2\}$ such that $(\epsilon_i)_H = \epsilon_H$.  Then $\epsilon$ agrees with $\epsilon_i$ on $S$, so 
$H_{D_i}^{\epsilon_i} = \emptyset$.  But $|D_i|<|S|$, which gives a contradiction.
\end{proof}

\begin{example}\label{ex:x2-arr}
The converse to \Cref{thm:chordal-means-yoshi} is false,
as illustrated by the arrangement $X_2$ of hyperplanes in $\R^3$ given by the following equations:    
    \[
        x_1=0, \quad x_2=0, \quad x_3=0, \quad x_2=x_3, \quad x_1=x_3, \quad x_1= -x_2, \quad x_1+x_2-2x_3=0.
    \]
    We can check with Macaulay2 that $I_2=I_3$, hence Theorem \ref{main} implies that $\cA$ is clean.
    The associated matroid are $20$ circuits, $5$ of which have three elements and $15$ of which have four elements.
    As there are only $10$ pairs of $3$-element circuits, $\cA$ cannot be chordal.
\end{example}

\subsection{Formality}\label{subsec:formality}
For each $H\in\cA$, choose a linear functional $\alpha_H\in V^*$  that is positive on $H^+$
(this choice is unique up to positive scaling).  Let $\F^\cA := \F\{e_H\mid H\in\cA\}$, and consider the 
linear map $\pi\colon\F^\A\to V^*$ defined by putting $\pi(e_H)=\alpha_H$ for all $H\in\cA$.
This induces a dual inclusion of $V$ into $\F^\cA$.  Let $V^\perp := \ker(\pi)\subset\F^\cA$, which
may also be interpreted as the orthogonal complement to $V$ with respect to the dot product.

For each flat $F\subseteq\A$ of the associated matroid, let $\pi_F$ be the restriction of
$\pi$ to the coordinate subspace $\F^F\subset\F^\cA$, and let $V_F^\perp :=\ker(\pi_F)\subset V^\perp$.  Let
\[
V_{2}^\perp := \sum_{\rk F = 2}V_F^\perp\subseteq V^\perp,
\]
let $V_2\subset\F^\cA$ be the orthogonal complement of $V_2^\perp$, and let $\pi_2:\F^\cA\to V_2^*$ be the projection.  Then we have the following diagram:
\[
\begin{tikzcd}
  0\ar[r] & V^\perp_{2}\ar[r]\ar[hookrightarrow,d] & \F^\A \ar[r,"\pi_{2}"]\ar[d,"="] & V^*_{2}
  \ar[twoheadrightarrow,d] \ar[r] & 0\\
  0\ar[r] & V^\perp \ar[r] & \F^\A\ar[r,"\pi"] & V^*\ar[r] & 0.
\end{tikzcd}
\]
An arrangement is {\bf formal} in the sense of Falk--Randell~\cite{FR86} if $V = V_2$.  This is equivalent to the statement that all linear relations between
the linear functionals $\a_H$ are generated by those involving only three hyperplanes.
Let $\A_{2}$ denote the arrangement in $V_{2}$ defined by the linear functionals $\pi_{2}(H)$ for $H\in\A$; this is called the 
{\bf formal closure} of $\A$.  

For any flat $F$, let $V_F \subset V$ be the intersection of the hyperplanes in $V$, and let $$\cA_F := \{H/V_F\mid D\in F\}$$ 
denote the {\bf localization} of $\cA$ at $V$, which is an essential arrangement in the vector space $V/V_F$.

\begin{proposition}\label{prop:local chambers}
  For any arrangement $k\geq 1$ and any sign vector $\epsilon\in\{\pm\}^\cA$, $\epsilon\in\Sigma_k(\cA)$ if 
  and only if $\epsilon|_F\in\sigma_k(\cA_F)$ for all flats $F$ of rank $k$.
\end{proposition}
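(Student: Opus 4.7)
The plan rests on a simple observation: for any flat $F$ of $\cA$ and any subset $\cS\subset\cA_F$ with any sign pattern $\eta$, the intersection $H_\cS^\eta$ computed in $V$ is nonempty if and only if the corresponding intersection computed in $V/V_F$ is nonempty. This is because every hyperplane in $\cS$ contains $V_F$, so $H_\cS^\eta$ is saturated with respect to cosets of $V_F$ and descends bijectively to the analogous set in $V/V_F$. The forward direction of the proposition follows at once: if $\epsilon\in\Sigma_k(\cA)$ and $F$ is a rank-$k$ flat, then for every $\cS\subset\cA_F$ with $|\cS|\leq k+1$ we have $H_\cS^\epsilon\neq\emptyset$ in $V$, hence $H_\cS^{\epsilon|_F}\neq\emptyset$ in $V/V_F$, so $\epsilon|_F\in\Sigma_k(\cA_F)$.

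For the converse, assume $\epsilon|_F\in\Sigma_k(\cA_F)$ for every rank-$k$ flat $F$, and fix $\cS\subset\cA$ with $|\cS|\leq k+1$; the goal is to show $H_\cS^\epsilon\neq\emptyset$. I would split on $r_0:=\rk(\cS)$. When $r_0=k+1$, the set $\cS$ is independent, so its normal functionals $\{\alpha_H\}_{H\in\cS}$ are linearly independent; the map $V\to\R^{\cS}$ they define is surjective, so the strict-inequality system cutting out $H_\cS^\epsilon$ has a solution for every sign vector and nonemptiness is automatic. When $r_0\leq k$, extend the closure of $\cS$ to a rank-$k$ flat $F$ of $\cA$ by repeatedly adjoining hyperplanes outside the current closure, which is possible because $\cA$ has rank at least $k$. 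Then $\cS\subset\cA_F$ with $|\cS|\leq k+1$, and the hypothesis at $F$ yields $H_\cS^{\epsilon|_F}\neq\emptyset$ in $V/V_F$, which lifts back to $V$ via the initial observation.

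The main subtlety is the need to split into these two cases. When $r_0=k+1$, no rank-$k$ flat of $\cA$ contains all of $\cS$, so the hypothesis does not apply and the conclusion must be extracted from linear independence of normals directly. When $r_0\leq k$, the extension to a rank-$k$ flat is standard matroid bookkeeping, and the hypothesis does the real work. Together these two cases exhaust every $\cS$ of cardinality at most $k+1$, finishing the converse.
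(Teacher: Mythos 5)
Your proof is correct and closely tracks the paper's, with the main structural move in the converse direction---a case split on whether $\cS$ is independent (rank $k+1$) or dependent (rank $\leq k$)---being identical. The one genuine difference is in the forward direction: the paper invokes Helly's theorem to conclude that the \emph{entire} intersection $H^\epsilon_F$ is nonempty (i.e., that $\epsilon|_F$ corresponds to a chamber of $\cA_F$), whereas you verify the definition of $\Sigma_k(\cA_F)$ directly by restricting the $k$-consistency of $\epsilon$ to subsets of $F$ and noting that nonemptiness is preserved under passage to $V/V_F$. Your version is more elementary, sidestepping Helly entirely; the paper's version records the stronger (equivalent, via the identification $\Sigma_{\rk} = \cC$) fact that $\epsilon|_F$ names an actual chamber. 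In the converse direction you are also slightly more explicit than the paper about needing to enlarge the closure of $\cS$ to a rank-$k$ flat before the hypothesis applies, a step the paper compresses. Both arguments are valid and the choice between them is a matter of taste.
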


\begin{proof}
Suppose $\epsilon\in\Sigma_k(\cA)$ and $F$ is a flat of rank $k$.  By Helly's
theorem, $H^\epsilon_F\neq\emptyset$, which means that $\epsilon|_F\in\sigma_k(\cA_F)$.
Conversely, suppose that $\epsilon|_F\in\sigma_k(\cA_F)$ for all flats $F$ of rank $k$,
let $S\subset\cA$ be a subset of cardinality $k+1$, and let $F$ be the smallest flat containing $S$.
If $S$ is independent, then $H^\epsilon_S\neq\emptyset$.  If $S$ is dependent, then $F$ has rank at most $k$,
and $H^\epsilon_S \supseteq H^\epsilon_F \neq \emptyset$, so $\epsilon\in \Sigma_k(\cA)$.
\end{proof}

For lack of a reference, we state and prove the following elementary lemma. 

\begin{lemma}\label{lem:chambers}
  Suppose $\cA$ is an essential arrangement in a real vector space $V$, $V'\subsetneq V$ is a linear subspace that is not contained in any element of $\cA$, and
  \[
  \cA'=\set{H\cap V'\mid H\in \cA}.
  \]
Then $\abs{\cC(\cA')}<\abs{\cC(\cA)}$.
\end{lemma}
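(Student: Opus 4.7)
My plan is to construct an injection $\varphi\colon\cC(\cA')\to\cC(\cA)$ and then show it fails to be surjective. Sending a chamber $C'\in\cC(\cA')$ to the unique chamber of $\cA$ containing it is well-defined: since $V'\not\subset H$ for any $H\in\cA$, each point of $C'$ avoids every $H$, so $C'\subset V\setminus\bigcup\cA$ and lies in a single chamber of $\cA$ by connectedness. The map is injective because each fiber is contained in $C\cap V'$, which is convex (as an intersection of convex sets) and hence connected, so meets only one chamber of $\cA'$.

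For non-surjectivity I must produce a chamber of $\cA$ disjoint from $V'$. First I extend $V'$ to a hyperplane $W\subset V$ with $W\notin\cA$: when $V'$ has codimension at least two, this is possible because infinitely many hyperplanes contain $V'$ while $\cA$ is finite, and when $V'$ is itself a hyperplane the hypothesis gives $V'\notin\cA$, so I may take $W=V'$. Let $\beta\in V^*$ be a defining linear functional for $W$. Any chamber on which $\beta$ has constant sign is disjoint from $W$, hence from $V'$.

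To find such a chamber, I use the inner product on $V$ given by
\[
\langle u,v\rangle_\cA := \sum_{H\in\cA}\alpha_H(u)\alpha_H(v),
\]
which is positive definite precisely because $\cA$ is essential. Let $\beta^*\in V$ be the unique vector with $\beta(\cdot)=\langle\beta^*,\cdot\rangle_\cA$, and pick any chamber $C^*$ of $\cA$ whose closure contains $\beta^*$. For each $H\in\cA$, the number $\alpha_H(\beta^*)$ either vanishes or has the same sign as $\alpha_H$ on $\overline{C^*}$, so every term in the identity $\beta(y)=\sum_H\alpha_H(\beta^*)\alpha_H(y)$ is nonnegative for $y\in\overline{C^*}$. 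Thus $\beta\geq 0$ on $\overline{C^*}$, and since $\beta$ is a nonzero linear form and $C^*$ is open, $\beta$ must be strictly positive throughout $C^*$; this $C^*$ satisfies $C^*\cap V'=\emptyset$, and therefore $C^*\notin\im(\varphi)$.

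The main obstacle is this final step: a naive separation argument does not suffice, because chambers may straddle a generic hyperplane, and the standard fact that a pointed cone can be separated from the origin does not a priori control which hyperplane performs the separation. The arrangement-dependent inner product circumvents this by producing a canonical vector $\beta^*$ whose chamber is automatically contained in one of the open halfspaces cut out by $W$.
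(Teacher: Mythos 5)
Your proof is correct, and it takes a genuinely different route from the paper's. Both arguments build the same injection $\cC(\cA')\hookrightarrow\cC(\cA)$, reduce to the case where $V'$ is a hyperplane, and then produce a region of $V$ disjoint from $V'$ to defeat surjectivity. The paper exploits essentiality by extracting a Boolean subarrangement $\cB\subset\cA$ whose normals form a basis of $V^*$; after choosing the signs of a dual basis $v_1,\dots,v_r$ so that $\alpha(v_i)\geq 0$, the chamber $\R_{>0}\{v_1,\dots,v_r\}$ of $\cB$ lies entirely in $\{\alpha>0\}$, and the factorization $\cC(\cA')\hookrightarrow\cC(\cA)\twoheadrightarrow\cC(\cB)$ then forces the first map to miss something. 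You instead encode essentiality in the positive-definiteness of $\langle u,v\rangle_\cA=\sum_H\alpha_H(u)\alpha_H(v)$, take the Riesz vector $\beta^*$ for $\beta$, and observe that any chamber with $\beta^*$ in its closure is forced to one side of $V'$ by the term-by-term positivity of the identity $\beta(y)=\sum_H\alpha_H(\beta^*)\alpha_H(y)$. Your approach produces the missing chamber of $\cA$ directly rather than passing through a coarser arrangement, and it avoids the explicit choice of a Boolean subarrangement, at the modest cost of introducing the auxiliary inner product; the paper's version is a bit more hands-on and may be easier to visualize, but both are elementary and complete. One stylistic note: your extension of $V'$ to a hyperplane $W\notin\cA$ is unnecessary machinery once you are in the hyperplane case, and in higher codimension any hyperplane $W\supset V'$ works (regardless of whether $W\in\cA$), since all you need is a nonzero $\beta$ vanishing on $V'$.
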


\begin{proof}
  It suffices to assume $V'$ is a hyperplane in $V$.  Choose $\alpha\in V^*$
  so that $V'=\ker \alpha$.  Since $\cA$ is essential,
  it contains a Boolean arrangement $\cB$ of rank $r=\dim V$.  The $1$-dimensional flats of $\cB$ are spanned by basis vectors $v_1,\ldots,v_r$
  for $V$, and we may choose their signs so that $\alpha(v_i)\geq 0$ for each
  $i$.  Then $\alpha$ is strictly positive on the cone $\R_{>0}\set{v_1,\ldots,v_r}$, which is a chamber of $\cB$.
We have natural maps
\[
  \cC(\cA')\hookrightarrow \cC(\cA)\twoheadrightarrow
  \cC(\cB).
  \]
  We showed the composite is not surjective, so neither is the first map.
  \end{proof}

The following result is due to Paul M\"ucksch~\cite{Mue25}.
\begin{theorem}\label{thm:pm}
  If $\A$ is clean, then $\A$ is formal.
\end{theorem}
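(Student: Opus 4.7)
The plan is to argue by contradiction. Assume $\cA$ satisfies Yoshinaga's criterion but is not formal, so that the formal closure $\cA_2$ lives in the strictly larger ambient space $V_2\supsetneq V$. I would first verify the two elementary facts needed to feed into Lemma~\ref{lem:chambers}: that $\cA_2$ is essential in $V_2$ (since $\pi_2$ is surjective, the functionals $\pi_2(e_H)$ span $V_2^*$), and that $V$ is not contained in any hyperplane of $\cA_2$ (since the pullback of $\pi_2(e_H)$ along $V\hookrightarrow V_2$ is the nonzero functional $\alpha_H$). Because $\cA$ is exactly the restriction of $\cA_2$ to $V\subsetneq V_2$, Lemma~\ref{lem:chambers} then yields
\[
\abs{\cC(\cA)} < \abs{\cC(\cA_2)}.
\]

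The heart of the argument is to compare $2$-consistent sign vectors. The containment $V_2^\perp\subseteq V^\perp$ implies that any dependency among the $\pi_2(e_H)$ descends to a dependency among the $\alpha_H$; conversely, the defining property $V_2^\perp = \sum_{\rk F=2} V_F^\perp$ ensures that every three-term dependency among the $\alpha_H$ already lies in $V_2^\perp$. Hence $\cA$ and $\cA_2$ have the same signed three-element circuits, which means that for every rank-$2$ flat $F$ the localizations $\cA_F$ and $(\cA_2)_F$ coincide as oriented matroids (they are both rank-$2$ arrangements specified by the same signed circuits). Applying Proposition~\ref{prop:local chambers} with $k=2$ to each of $\cA$ and $\cA_2$ therefore yields $\Sigma_2(\cA)=\Sigma_2(\cA_2)$, and in particular $\sigma_2(\cA)=\sigma_2(\cA_2)$.

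Combining this identification with Yoshinaga's criterion for $\cA$ gives
\[
\abs{\cC(\cA)} \;=\; \sigma_2(\cA) \;=\; \sigma_2(\cA_2) \;\geq\; \sigma_{\dim V_2}(\cA_2) \;=\; \abs{\cC(\cA_2)},
\]
which directly contradicts the strict inequality produced by Lemma~\ref{lem:chambers}. Therefore $\cA$ must be formal.

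The main obstacle is the identification $\Sigma_2(\cA)=\Sigma_2(\cA_2)$: one needs to know not only that the underlying rank-$2$ matroids of $\cA$ and $\cA_2$ agree, but that their oriented versions do. This is where the defining property of $V_2^\perp$ and the containment $V_2^\perp\subseteq V^\perp$ must both be used, together with Proposition~\ref{prop:local chambers} to move from a statement about rank-$2$ localizations to the global equality of $2$-consistent sign vectors.
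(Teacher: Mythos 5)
Your argument is correct and follows essentially the same route as the paper: apply Lemma~\ref{lem:chambers} to the proper inclusion $V\subsetneq V_2$ to get $\abs{\cC(\cA)}<\abs{\cC(\cA_2)}$, observe that for every rank-$2$ flat $F$ the oriented localizations $\cA_F$ and $(\cA_2)_F$ agree (since $V_F^\perp\subseteq V_2^\perp\subseteq V^\perp$), and feed this into Proposition~\ref{prop:local chambers} with $k=2$. You package the final contradiction as the chain $\abs{\cC(\cA)}=\sigma_2(\cA)=\sigma_2(\cA_2)\geq\abs{\cC(\cA_2)}$ rather than exhibiting a single witness sign vector $\epsilon\in\Sigma_2(\cA)\setminus\Sigma_{\rk\cA}(\cA)$ as the paper does, and you make explicit the verification that Lemma~\ref{lem:chambers} applies, but these are cosmetic differences in organization, not in substance.
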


\begin{proof}
Suppose $\A$ is not formal.  Then $V_2\supsetneq V$, so Lemma~\ref{lem:chambers} tells us that 
$\cA_2$ has more chambers than $\cA$.  Since chambers of $\cA$ are in bijection with $\Sigma_{\rk \cA}(\cA)$,
this means that there exists a sign vector $\epsilon\in\Sigma_{\rk \cA_2}(\cA_2)\setminus\Sigma_{\rk \cA}(\cA)$.
For every flat $F$ of rank 2, we have $$\epsilon|_F \in \Sigma_{2}((\cA_2)_F) = \Sigma_{2}(\cA_F),$$
so $\epsilon\in\Sigma_2(\cA)$ by Proposition~\ref{prop:local chambers}.
But $\epsilon\not\in\Sigma_{\rk \cA}(\A)$, so $\A$ is not clean.
\end{proof}
Following \cite[Def.\ 2.3]{Yuz93}, we say an arrangement $\A$ is {\bf locally
  formal} if the localization $\A_F$ is formal for all $F$.  Taking
$F=\A$, we note that locally formal arrangements are formal.
\begin{corollary}\label{cor:yoshi-typeA}
  If $\A$ is a graphical arrangement, the following are equivalent:
  \begin{enumerate}
  \item[(1)] $\cA$ is chordal
  \item[(2)] $\A$ is clean\label{eq:cor1}
    \item[(3)] $\A$ is locally formal\label{eq:cor2}
    \item[(4)] $\A$ is $K(\pi,1)$.  \label{eq:cor3}
\end{enumerate}
\end{corollary}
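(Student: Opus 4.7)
The plan is to close a cycle of implications by combining the two main theorems of this section with classical results about graphical arrangements. The implications (1) $\Rightarrow$ (2) and (2) $\Rightarrow$ (3) follow immediately from Theorem~\ref{thm:chordal-means-yoshi} and Theorem~\ref{thm:pm}, neither of which actually uses the graphical hypothesis. For the equivalence (1) $\iff$ (4) in the graphical case, I would invoke the classical theorem of Stanley that a graphic arrangement $\cA_G$ is supersolvable if and only if $G$ is a chordal graph, combined with Terao's theorem that supersolvable arrangements are $K(\pi,1)$, and the converse for graphical arrangements that $K(\pi,1)$ implies chordality (a classical result dating back to work of Stanley and subsequent authors).

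Given the above, the only remaining step is to close the cycle, which I plan to do by establishing (3) $\Rightarrow$ (1): a formal graphical arrangement has a chordal underlying graph. My approach is the contrapositive. Suppose $G$ is not chordal, so that $G$ contains a chordless cycle $C = v_0 v_1 \cdots v_{k-1} v_0$ of length $k \geq 4$. The corresponding signed cycle relation $r_C \in V^\perp$ is nonzero. Since for a graphical arrangement $V_2^\perp$ is spanned by the triangle relations of $G$ (the only rank-2 flats carrying a nontrivial dependence correspond to triangles in $G$), it suffices to show that $r_C$ cannot be expressed as a linear combination of triangle relations; this forces $V_2^\perp \subsetneq V^\perp$, contradicting formality.

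The main obstacle is ruling out that triangle relations elsewhere in $G$ — in particular those built from vertices not on $C$ — combine to produce $r_C$. The naive argument, that a chordless cycle admits no triangles among its own edges, is not on its own enough. A natural route is a localization or minor-theoretic reduction: pass to an appropriate subarrangement or contraction where the chordless cycle $C$ persists as a matroid circuit but the ``irrelevant'' triangles disappear, reducing to the case of a pure cycle arrangement where non-formality is transparent since there are no triangles at all. Getting this reduction right, and verifying that formality of $\cA_G$ really would force formality of the reduced arrangement, is the technical crux of the proof.
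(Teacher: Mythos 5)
Your outline agrees with the paper on the implications $(1)\Rightarrow(2)$ and $(2)\Rightarrow(3)$, but the way you close the cycle diverges in two places, and both create genuine gaps.

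First, you assert that $(4)\Rightarrow(1)$ is ``a classical result dating back to work of Stanley and subsequent authors.'' This is not correct. Stanley's theorem gives chordal $\iff$ supersolvable for graphical arrangements, and supersolvable $\Rightarrow K(\pi,1)$ is Falk--Randell, which gives you $(1)\Rightarrow(4)$. But the reverse implication, that a $K(\pi,1)$ graphical arrangement must be chordal, was \emph{not} previously known; it is precisely a new consequence of Yoshinaga's Theorem~\ref{thm:yoshinaga} (the 2024 preprint). The paper obtains $(4)\Rightarrow(2)$ directly from Theorem~\ref{thm:yoshinaga}, and then $(4)\Rightarrow(1)$ falls out of the cycle once $(2)\Rightarrow(3)\Rightarrow(1)$ is in place. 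By treating the $K(\pi,1)\Rightarrow\text{chordal}$ direction as already known, you have in effect assumed the hardest part of the corollary and bypassed the one place where Yoshinaga's theorem is used.

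Second, for $(3)\Rightarrow(1)$ the paper simply cites Toh\u{a}neanu~\cite{To07}, who proved that a graphic arrangement is formal if and only if its graph is chordal. Your proposed direct argument is, as you acknowledge, incomplete at its crux, and the specific reduction you sketch is in fact problematic: formality is \emph{not} in general inherited by subarrangements or localizations, so showing that a chordless cycle forces non-formality of the smaller arrangement does not straightforwardly transfer non-formality to $\cA_G$. The obstruction is exactly what you worried about --- triangles using vertices off the cycle can, through chains of cancellations, contribute to the cycle relation --- and a minor-theoretic pass-through does not obviously rule this out. Citing Toh\u{a}neanu is the intended route; if you want to prove it from scratch, you would need a different mechanism than ``restrict and observe the small arrangement is not formal.''

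So: replace your appeal to a nonexistent classical $(4)\Rightarrow(1)$ result with the single application of Theorem~\ref{thm:yoshinaga} to get $(4)\Rightarrow(2)$, and either cite Toh\u{a}neanu for $(3)\iff(1)$ or find a correct substitute for your reduction step.
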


\begin{proof}
  Theorem \ref{thm:chordal-means-yoshi} tells us that (1) implies (2).
  A fortiori, if $\A$ is clean, then so is every localization $\A_F$.  Using
  Theorem \ref{thm:pm} for each $\A_F$ shows that (2) implies (3).

  Suppose $\A$ is not chordal: then it has a localization $\A_F$ to a graph
  which is a circuit of length at least $4$.  Such an arrangement is not
  formal, so $\A$ is not locally formal, and (3) implies (1).

  %  and \ref{thm:pm} tell us that (1) implies (2) and (2) implies (3).
%Toh\u aneanu \cite{To07} showed that a graphical arrangement is formal if and
%only if it is chordal, so the first three conditions are equivalent.
Chordal graphical arrangements are supersolvable,
hence $K(\pi,1)$ \cite{FR87}, so (1) implies (4).
Finally, (4) implies (2) by Theorem~\ref{thm:yoshinaga}.
\end{proof}

\begin{remark}\label{formal converse}
  The converse to Theorem \ref{thm:pm} is false.  Ziegler~\cite[Ex.\ 8.7]{Zie89} provided a provided
  a pair of combinatorially equivalent rank-$3$ arrangements, distinguished by  whether or not their (six) triple points lie on a conic, shown in Figure~\ref{fig:Ziegler}.  The Varchenko--Gelfand algebras are isomorphic,
  and a Macaulay2~\cite{M2} computation shows they are not quadratic.
  Yuzvinsky noted that the special arrangement is not formal, while the general one is \cite[Ex.\ 2.2]{Yuz93}.
\end{remark}
\begin{remark}
  Not every formal arrangement $\A$ is locally formal: for example, the
  edge graph of an octahedron gives a formal graphical arrangement, but
  the graph is not chordal: we refer to   Toh\u aneanu~\cite{To07} for a
  complete characterization.

  By way of contrast, if $\A$ is formal, it was recently shown that so is
  the arrangement $\A^F$, for all $F$ (\cite[Thm.\ 1.2]{MMR24}).
\end{remark}
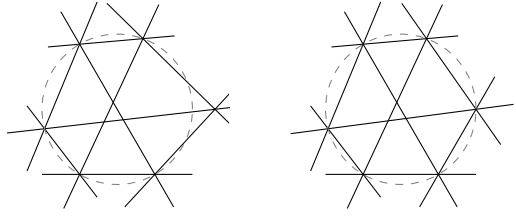
\begin{figure}  % Ziegler pair
  \[
\begin{tikzpicture}%[scale=0.9]
\begin{scope}
\clip (0,0) circle (1.5);
\coordinate (A) at (0:1.3);
\coordinate (B) at (70:1);
\coordinate (C) at (120:1);
\coordinate (D) at (195:1);
\coordinate (E) at (240:1);
\coordinate (F) at (300:1);
\drawline{(A)}{(B)}
\drawline{(B)}{(C)}
\drawline{(C)}{(D)}
\drawline{(D)}{(E)}
\drawline{(E)}{(F)}
\drawline{(F)}{(A)}
\drawline{(A)}{(D)}
\drawline{(B)}{(E)}
\drawline{(C)}{(F)}
\draw[dashed,gray] (0,0) circle (1);
\end{scope}
\end{tikzpicture}
% now the second one
\qquad
\begin{tikzpicture}%[scale=0.9]
\begin{scope}
\clip (0,0) circle (1.5);
\coordinate (A) at (0:1);
\coordinate (B) at (70:1);
\coordinate (C) at (120:1);
\coordinate (D) at (195:1);
\coordinate (E) at (240:1);
\coordinate (F) at (300:1);
\drawline{(A)}{(B)}
\drawline{(B)}{(C)}
\drawline{(C)}{(D)}
\drawline{(D)}{(E)}
\drawline{(E)}{(F)}
\drawline{(F)}{(A)}
\drawline{(A)}{(D)}
\drawline{(B)}{(E)}
\drawline{(C)}{(F)}
\draw[dashed,gray] (0,0) circle (1);
\end{scope}
\end{tikzpicture}
\]
  \caption{Ziegler's pair (in $\P^2$)}\label{fig:Ziegler}
\end{figure}

\subsection{Relationships}\label{subsec:relationships}
Below are several well-known arrangements, together with a summary of which properties they satisfy.
Here OS refers to the Orlik--Solomon algebra, Cord refers to the Cordovil algebra, and both quad Cord and quad OS mean that the defining ideals of the corresponding rings are quadratically generated.

\begin{center}
\resizebox{\columnwidth}{!}{%
\begin{tabular}{|c|cccccc|}
    \hline
\textbf{Arrangement} & $\mathbf{K(\pi,1)}$ & \textbf{free} & \textbf{formal} & \textbf{clean} & \textbf{quad Cordovil} & \textbf{quad OS}\\
\hline
Falk \cite[Example 3.13]{falk95} & \checkmark & \checkmark & \checkmark & \checkmark & $\times$ & $\times$ \\ 
DY \cite[Example 4.6]{DY02} & ? & $\times$ & \checkmark & \checkmark & $\times$ & $\times$ \\ 
Ziegler$_1$ \cite[Example 8.7]{Zie89} & $\times$ & $\times$ & \checkmark & $\times$ & $\times$ & $\times$ \\ 
Ziegler$_2$ \cite[Example 8.7]{Zie89} & $\times$ & $\times$ & $\times$ & $\times$ & $\times$ & $\times$ \\
ER \cite[Theorem 2.1 ($\alpha=-1$)]{Edelman-Reiner3}  & $\times$ & \checkmark & \checkmark & \checkmark & \checkmark & \checkmark  \\
ER \cite[Theorem 2.1 ($\alpha=0$ or $1$)]{Edelman-Reiner3}  & $\times$ & \checkmark & \checkmark & \checkmark & $\times$ & $\times$  \\
$D_4$ (Example \ref{ex:d4}) & \checkmark & \checkmark & \checkmark & \checkmark & $\times$ & $\times$ \\ 
$X_2$ (Example \ref{ex:x2-arr}) & $\times$ & $\times$ & \checkmark & \checkmark & \checkmark & \checkmark  \\
\hline
\end{tabular}
}
\end{center}

The following diagram summarizes the relationships (and non-relationships) between some of these properties.

\begin{tikzpicture}[
    node distance=2cm,
    every node/.style={rounded corners, align=center},
]
% Base Vertex
\node[] (yoshi) {Clean};

% All other vertices
\node[below of=yoshi,xshift=-0.5cm,yshift=-0.5cm] (quadVG) {Quadratic VG};
\node[above of=yoshi] (chordal) {Chordal};
\node[left of=yoshi, xshift=-3cm] (quadgr) {Quadratic Cordovil};
\node[below of=quadVG,xshift=-0.5cm,yshift=-0.5cm] (quados) {Quadratic OS};
%\node[below right of=quados,xshift=1cm] (rationalkpi) {Rational $K(\pi,1)$};
\node[right of=yoshi,xshift=3cm,yshift=2cm] (kpi) {$K(\pi,1)$};
\node[below of=kpi,yshift=-1cm] (formal) {Formal};

%\node[below of=formal] (free) {Free};
\node[below of=formal,xshift=1cm,yshift=-1cm] (supersolvable) {Supersolvable};
\node at ($(formal)!0.5!(supersolvable)$) (free) {Free};
\node[below of=supersolvable,xshift=0.5cm] (koszulos) {Koszul OS};
\node at ($(koszulos)!0.5!(quados)$) (rationalkpi) {Rational $K(\pi,1)$};

% Edges
\draw[<->] (yoshi) to node[midway, left]{Thm.\ \ref{main}} (quadVG);
\draw[<-,red,dashed, bend left=20] (chordal) to node[midway, right, text=red]{$X_2$} (yoshi);
\draw[->,bend right=20] (chordal) to node[midway,left]{Thm.\,\ref{thm:chordal-means-yoshi}} (yoshi);
\draw[->, red, dashed] (yoshi) -- node[midway, above, text=red]{Falk, DY} (quadgr);
\draw[->, red, dashed] (quadVG) to node[midway, right, text=red]{$D_4$} (quados);
\draw[->] (yoshi) -- (formal) node[midway,above] {Thm.\,\ref{thm:pm}};
\draw[->, bend right=10] (quadgr) to node[midway,below]{Cor.\,\ref{cor:cordovil}} (yoshi);
\draw[->] (quados) -- (formal);
\draw[->, red, dashed,bend left = 15] (formal) to node[midway, below, text=red]{~$D_4$} (quados);
\draw[->,red,dashed, bend left=20] (yoshi) to node[midway, above, text=red]{$X_2$} (kpi);
\draw[<-] (yoshi) to node[midway, right]{Thm.\,\ref{thm:yoshinaga}} (kpi);
\draw[->] (kpi) to (formal);
\draw[->] (rationalkpi) to (quados);
\draw[<->] (rationalkpi) to (koszulos);
\draw[->] (supersolvable) to (koszulos);
\draw[->] (supersolvable) to (free);
\draw[->,bend right=50] (supersolvable) to (kpi);
\draw[->] (free) to (formal);
\draw[<-,red,dashed,bend left = 60] (kpi) to node[midway, right, text=red]{$D_4$} (koszulos);
\draw[<-,red,dashed,bend left = 30] (kpi) to node[midway, right, text=red]{ER} (free);
\draw[->,red,dashed] (formal) to node[midway, above, text=red]{Ziegler$_1$} (quadVG);
\draw[->,bend right=10] (quadVG) to (formal);%to node[midway, below]{Thm.\,\ref{thm:pm}} (formal);

% Legend
\node[draw, align=left, anchor=north east, font=\footnotesize] at ([yshift=-2cm,xshift=-2cm]yoshi.south west) {
  \textbf{Legend:} \\
  \tikz[baseline] \draw[->] (0,0) -- (0.6,0); \quad Tail $\Rightarrow$ Head \\
  \tikz[baseline] \draw[<->] (0,0) -- (0.6,0); \quad Head $\iff$ Tail \\
  \tikz[baseline] \draw[->, red, dashed] (0,0) -- (0.6,0); \quad Tail $\not\Rightarrow$ Head \\
};
\end{tikzpicture}

\bibliographystyle{amsalpha}
\bibliography{bibliography}

\end{document}